\theoremstyle{plain} 
\newtheorem{theorem}{Theorem} 
\newtheorem{lemma}{Lemma}
\theoremstyle{definition} 
\newtheorem{definition}{Definition} 
\theoremstyle{remark} 
\newtheorem{remark}{Remark}
\begin{document}

\title[Approximate Controllability and Optimal Control]{Approximate 
Controllability of Impulsive\\ 
Non-local Non-linear Fractional\\ 
Dynamical Systems and Optimal Control}


\author[S. Guechi]{Sarra Guechi} 
\address{Guelma University\\ Department of Mathematics\\ 24000 Guelma\\ Algeria} 
\email{guechi.sara@yahoo.fr} 

\author[A. Debbouche]{Amar Debbouche} 
\address{Guelma University\\ Department of Mathematics\\ 24000 Guelma\\ Algeria} 
\email{amar$_{-}$debbouche@yahoo.fr} 

\author[D. F. M. Torres]{Delfim F. M. Torres} 
\address{University of Aveiro\\ DMat\\ 
CIDMA\\ 3810-193 Aveiro\\ Portugal}
\email{delfim@ua.pt} 


\begin{abstract}
We establish existence, approximate controllability and optimal control 
of a class of impulsive non-local non-linear fractional dynamical systems 
in Banach spaces. We use fractional calculus, sectorial operators and 
Krasnoselskii fixed point theorems for the main results. Approximate 
controllability results are discussed with respect to the 
inhomogeneous non-linear part. Moreover, we prove existence results 
of optimal pairs of corresponding fractional control systems with a Bolza cost
functional. 
\end{abstract}

\subjclass[2010]{26A33; 45J05; 49J15; 93B05} 	

\keywords{Fractional nonlinear equations, Approximate controllability,\\ 
$q$-resolvent families, Optimal control, Nonlocal and impulsive conditions}

\maketitle


\section{Introduction}
We are concerned with an impulsive non-local non-linear 
fractional control dynamical system of form
\begin{equation} 
\label{e1}
\left\lbrace 
\begin{array}{lll} 
^{C}D_{t}^{q}x(t)=Ax(t)+f(t, x(t), (Hx)(t))+Bu(t), 
\ t\in (0, b]\setminus\lbrace t_1, t_2,\ldots, t_m\rbrace,\\
x(0)+g(x)=x_{0}\in X,\quad
\triangle x(t_{i})=I_{i}(x(t_{i}^{-}))+Dv(t_{i}^{-}), \quad i=1,2,\ldots,m,
\end{array}
\right.
\end{equation}
where $^{C}D_{t}^{q}$ is the Caputo fractional derivative of order $0<q<1$, 
the state $x(\cdot)$ takes its values in a Banach space $X$ with norm 
$\Vert\cdot\Vert$, and $x_0\in X$. Let $A: D (A)\subset X\to X$ be a sectorial 
operator of type $(M, \theta, q, \mu)$ on $X$, $H: I\times I\times X \to X$ 
represents a Volterra-type operator such that $(Hx)(t)=\int_{0}^{t}h(t, s, x(s))ds$, 
the control functions $u(\cdot)$ and $v(\cdot)$ are given in $L^{2}(I, U)$, 
$U$ is a Banach space, $B$ and $D$ are bounded linear operators from $U$ into $X$. 
Here, one has $I=[0, b]$, $0=t_0<t_1<\cdots<t_m<t_{m+1}=b$, $I_{i}: X\to X$ are impulsive 
functions that characterize the jump of the solutions at impulse points $t_i$, 
the non-linear term $f: I \times X\times X\to X$, the non-local function 
$g: PC(I, X)\to X$, with $PC$ defined later, $\triangle x(t_{i})=x(t_i^+)-x(t_i^-)$, 
where $x(t_i^+)$ and $x(t_i^-)$ are the right and left limits of $x$ at the point $t_i$, 
respectively. 

Derivatives and integrals of arbitrary order, the main objects of Fractional Calculus (FC), 
have kept the interest of many scientists in recent years, since they 
provide an excellent tool to describe hereditary properties of various materials 
and processes. During the past decades, FC and its applications have gained 
a lot of importance, due to successful results in modelling several complex phenomena 
in numerous seemingly diverse and widespread 
fields of science and engineering, such as heat conduction, diffusion, 
propagation of waves, radiative transfer, kinetic theory of gases, diffraction 
problems and water waves, radiation, continuum mechanics, geophysics, electricity 
and magnetism, as well as in mathematical economics, communication theory, 
population genetics, queuing theory and medicine. For details on the theory and 
applications of FC see \cite{r3}. For recent developments 
in non-local and impulsive fractional differential problems see 
\cite{r6,r7,r8,r9} and references therein.

The problem of controllability is one of the most important qualitative 
aspects of dynamical systems in control theory. It consists to show the 
existence of a control function that steers the solution of the system from 
its initial state to a final state, where the initial and final states may 
vary over the entire space. This concept plays a major role in finite-dimensional 
control theory, so that it is natural to try to generalize it to infinite 
dimensions \cite{r12}. Moreover, exact controllability for semi-linear fractional 
order systems, when the non-linear term is independent of the control function, 
is proved by assuming that the controllability operator has an induced inverse 
on a quotient space. However, if the semi-group associated with the system is compact, 
then the controllability operator is also compact and hence the induced inverse 
does not exist because the state space is infinite dimensional \cite{r13}. 
Thus, the concept of exact controllability is too strong and
has limited applicability, while approximate controllability 
is a weaker concept completely adequate in applications.

On the other hand, control systems are often based on the principle 
of feedback, where the signal to be controlled is compared to a desired 
reference, and the discrepancy is used to compute a corrective control action. 
Fractional optimal control of a distributed system is an optimal control problem
for which the system dynamics is defined with fractional differential equations. 
Recently, attention has been paid to prove existence, approximate controllability 
and/or optimal control for different classes of fractional differential equations
\cite{r14,r15,r16,r17}. 

In \cite{r18}, optimal control of non-instantaneous impulsive differential equations is studied.
Qin et al. investigate approximate controllability and optimal control of
fractional dynamical systems of order $1<q<2$ in Banach spaces \cite{r19}. 
Debbouche and Antonov  established approximate controllability of semi-linear Hilfer 
fractional differential inclusions with impulsive control inclusion 
conditions in Banach spaces \cite{r20}. Motivated by the above works, 
here we construct an impulsive non-local non-linear 
fractional control dynamical system and prove new sufficient conditions 
to treat the questions of approximate controllability and optimal control.

The paper is organized as follows. In Section~\ref{sec:2}, we recall some 
facts from fractional calculus, $q$-resolvent families, and useful versions 
of fixed point techniques that are used for obtaining our main results. 
In Section~\ref{sec:3}, we form appropriate sufficient conditions and prove 
existence results for the fractional control system \eqref{e1}. 
In Section~\ref{sec:4}, we investigate the question of
approximate controllability. We end with Section~\ref{sec:5}, 
where we obtain optimal controls corresponding to fractional 
control systems with a Bolza cost functional. 


\section{Preliminaries}
\label{sec:2}

Here we present some preliminaries from fractional calculus \cite{r3}, 
operator theory \cite{r22} and fixed point techniques \cite{r6}, 
which are used throughout the work to obtain the desired results. 

\begin{definition}
\label{Definition:2.1}
The left-sided Riemann--Liouville fractional integral of order $\alpha>0$, 
with lower limit $a$, for a function $f: [a, +\infty)\to \mathbb{R}$, is defined as
$$
I^{\alpha}_{a^{+}}f(t)=\frac{1}{\Gamma(\alpha)}\int_{a}^{t}(t-s)^{\alpha-1}f(s)ds,
$$
provided the right side is point-wise defined on $[a, +\infty)$, 
where $\Gamma(\cdot)$ is the Euler gamma function. If $a=0$, 
then we can write $I^{\alpha}_{0^{+}}f(t)=(g_{\alpha}*f)(t)$, where
$$
g_{\alpha}(t):=\left\{
\begin{array}{ll}
\frac{1}{\Gamma(\alpha)}t^{\alpha-1},
& \mbox{$t>0$},\\0, & \mbox{$t\leq 0$},
\end{array}\right.
$$
and $*$ denotes convolution of functions. Moreover, 
$\lim\limits_{\alpha\rightarrow 0}g_{\alpha}(t)=\delta(t)$, 
with $\delta$ the delta Dirac function.
\end{definition}

\begin{definition}
\label{Definition:2.2}
The left-sided Riemann--Liouville fractional derivative of order $\alpha>0$, 
$n-1\leq\alpha<n$, $n\in \mathbb{N}$, for a function $f: [a, +\infty)\to \mathbb{R}$, 
is defined by
$$
^{L}D^{\alpha}_{a^{+}}f(t)=\frac{1}{\Gamma(n-\alpha)}
\frac{d^{n}}{dt^{n}}\int_{a}^{t}\frac{f(s)}{(t-s)^{\alpha+1-n}}ds,
\quad t>a,
$$
where function $f$ has absolutely continuous derivatives up to order $n-1$.
\end{definition}

\begin{definition}
\label{Definition:2.3}
The left-sided Caputo fractional derivative of order $\alpha>0$, 
$n-1<\alpha<n$, $n\in \mathbb{N}$, for a function 
$f: [a, +\infty)\to \mathbb{R}$, is defined by
$$
^{C}D^{\alpha}_{a^{+}}f(t)=\frac{1}{\Gamma(n-\alpha)}
\int_{a}^{t}\frac{f^{(n)}(s)}{(t-s)^{\alpha+1-n}}ds
=I^{n-\alpha}_{a^{+}}f^{(n)}(t),\quad  t>a,
$$
where function $f$ has absolutely continuous derivatives up to order $n-1$.
\end{definition}

Throughout the paper, by $PC(I, X)$ we denote the space of $X$-valued bounded 
functions on $I$ with the uniform norm $\Vert x\Vert_{PC}=\sup\lbrace 
\Vert x(t)\Vert, t\in I\rbrace$ such that $x(t^{+}_{i})$ exists for 
any $i=0,\ldots,m$ and $x(t)$ is continuous on $(t_i, t_{i+1}]$, 
$i=0,\ldots,m$, $t_0=0$ and $t_{m+1}=b$.

\begin{definition}[See \cite{r21}]
\label{Definition:2.4}
Let $A: D\subseteq X \to X$ be a closed and linear operator. 
We say that $A$ is \emph{sectorial} of type $(M,\theta,q ,\mu)$, 
if there exists $\mu\in \mathbb{R}$, $0<\theta<\frac{\pi}{2}$
and $M>0$ such that the $q$-resolvent 
of $A$ exists outside the sector 
$$
\mu+S_{\theta}=\lbrace\mu+\lambda^{q}: 
\lambda\in \mathbb{C},\vert\text{Arg}(-\lambda^{q})\vert <\theta\rbrace
$$
and
$$
\Vert(\lambda^{q} I-A)^{-1}\Vert 
\leq \frac{M}{\vert \lambda^{q}-\mu\vert},\lambda^{q}\notin\mu+S_{\theta}.
$$
\end{definition}

\begin{remark}
\label{Remark:2.5}
If $A$ is a sectorial operator of type $(M, \theta, q, \mu)$, 
then it is not difficult to see that $A$ is the infinitesimal 
generator of a $q$-resolvent family ${T_{q}(t)}_{t\geq 0}$ in a Banach space, 
where $T_{q}(t)=\frac{1}{2\pi i}\int_{c}e^{\lambda t}R(\lambda^{q},A)d\lambda$.
\end{remark}

\begin{definition} [Motivated by \cite{r20,r21}]
\label{Definition:2.6}
A state function $x\in PC(I, X)$ is called a mild solution of \eqref{e1} 
if it satisfies the following integral equations:
\begin{equation*}
x(t)= S_{q}(t)(x_{0}-g(x))+\int_{0}^{t}T_{q}(t-s)(f(s,x(s),(Hx)(s))
+Bu(s))ds
\end{equation*}
if $t\in[0, t_{1}]$, and
\begin{multline*}
x(t)= S_{q}(t-t_{i})[x(t_i^-)+I_{i}(x(t_{i}^{-}))+Dv(t_{i}^{-})]\\
+\int_{t_{i}}^{t}T_{q}(t-s)[f(s,x(s),(Hx)(s))+Bu(s)]ds
\end{multline*}
if $t\in(t_{i}, t_{i+1}]$, $i=1,\ldots,m$, where
\begin{equation*}
S_{q}(t)=\frac{1}{2\pi i}\int_{c}e^{\lambda t}\lambda^{q-1}R(\lambda^{q},A)d\lambda
\quad \text{and} \quad
T_{q}(t)=\frac{1}{2\pi i}\int_{c}e^{\lambda t}R(\lambda^{q},A)d\lambda
\end{equation*}
with $c$ being a suitable path such that 
$\lambda^{q}\notin \mu+S_{\theta}$ for $\lambda\in c$.
\end{definition}

Let $x_{t_{k}}(x(0), \triangle x(t_{k-1}); u, v)$, 
$k=1,\ldots,m+1$, be the state value of \eqref{e1}
at time $t_{k}$, corresponding to the non-local 
initial value $x(0)$, the impulsive values 
$\triangle x(t_{k-1})=x(t_{k-1}^+)-x(t_{k-1}^-)$ and 
the controls $u$ and $v$. For every $x(0)$
and $\triangle x(t_{k-1})\in X$, we introduce the set
$$
\mathfrak{R}(t_{k}, x(0), \triangle x(t_{k-1}))=\left\lbrace
x_{t_{k}}\left(x(0), \triangle x(t_{k-1}); u, v\right): 
u(\cdot),v(\cdot)\in L^{2}(I, U)\right\rbrace,
$$
which is called the \emph{reachable set} of system \eqref{e1}
at time $t_{k}$ (if $k=m+1$, then $t_{k}$ is the terminal time). 
Its closure in $X$ is denoted by 
$\overline{\mathfrak{R}(t_{k}, x(0), \triangle x(t_{k-1}))}$.

\begin{definition}
\label{Definition:2.7}
The impulsive control system \eqref{e1} is said 
to be approximately controllable on $I$
if $\overline{\mathfrak{R}(t_{k}, x(0), \triangle x(t_{k-1}))}=X$, 
that is, given an arbitrary $\epsilon>0$,
it is possible to steer from the points $x(0)$ and $\triangle x(t_{k-1})$ 
at time $t_{k}$ all points in the state space
$X$ within a distance $\epsilon$.
\end{definition}

Consider the linear impulsive fractional control system
\begin{equation*}
\label{e3}
\left\lbrace 
\begin{array}{lll} 
^{C}D_{t}^{q}x(t)=Ax(t)+Bu(t),\\
x(0)=x_{0}\in X,\\
\triangle x(t_{i})=Dv(t_{i}^{-}), \quad i=1,\ldots,m.
\end{array}
\right. 
\end{equation*}
Approximate controllability for the linear impulsive 
control fractional system \eqref{e3} is a natural
generalization of the notion of approximate controllability
of a linear first-order control system ($q=1$ and $t_i=D=0$, 
$i=1,2,\ldots,m$, i.e., $t\in [t_{m}, t_{m+1}]=[0, b]$).
The controllability operators
associated with \eqref{e3} are
\begin{equation}
\label{e4}
\begin{aligned}
\Psi^{t_k}_{t_{k-1}, 1}&=\int_{t_{k-1}}^{t_k}
T_{q}(t_k-s)BB^{\ast}T_{q}^{\ast}(t_k-s)ds,\quad k=1,\ldots,m+1,\\
\Psi^{t_k}_{t_{k-1}, 2}&=S_{q}(t_k-t_{k-1})DD^{\ast}S_{q}^{\ast}(t_k-t_{k-1}),
\quad k=2,\ldots,m+1,
\end{aligned}
\end{equation}
where $T_{q}^{\ast}(\cdot)$, $S_{q}^{\ast}(\cdot)$, $B^{\ast}$ and $D^{\ast}$
denote the adjoints of $T_{q}(\cdot)$, $S_{q}(\cdot)$, $B$ and $D$, respectively.
Moreover, for $\lambda>0$, we consider the relevant operator
\begin{equation}
\label{e5}
\mathcal{R}(\lambda, \Psi^{t_k}_{t_{k-1}, i})
=\left(\lambda I+\Psi^{t_k}_{t_{k-1}, i}\right)^{-1},
\quad i=1,2.
\end{equation}
It is easy to verify that
$\Psi^{t_k}_{t_{k-1}, 1}$ and 
$\Psi^{t_k}_{t_{k-1}, 2}$ 
are linear bounded operators.

\begin{lemma}[See \cite{r20}]
\label{Lemma:2.8}
The linear impulsive control fractional system \eqref{e3}
is approximately controllable on $I$ if and only if
$\lambda\mathcal{R}(\lambda, \Psi^{t_k}_{t_{k-1}, i})\rightarrow 0$ 
as $\lambda\rightarrow 0^{+}$, $i=1,2$,
in the strong operator topology.
\end{lemma}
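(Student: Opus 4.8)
The plan is to follow the standard resolvent approach to approximate controllability (as in \cite{r20}), carried out on each subinterval and for each of the two control channels. Fix $k\in\{1,\ldots,m+1\}$. Inserting the data of \eqref{e3} into the mild solution formula of Definition~\ref{Definition:2.6} and iterating the jump relations, I would first write the state at time $t_k$ in the form
\begin{equation*}
x_{t_k}\bigl(x(0),\triangle x(t_{k-1});u,v\bigr)=\xi_k+\int_{t_{k-1}}^{t_k}T_q(t_k-s)Bu(s)\,ds+S_q(t_k-t_{k-1})Dv(t_{k-1}^-),
\end{equation*}
where the vector $\xi_k\in X$ collects all the terms generated by $x_0$ and by the operators $S_q,T_q$ and is independent of the controls. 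Hence $\mathfrak{R}(t_k,x(0),\triangle x(t_{k-1}))$ is the translate by $\xi_k$ of $\operatorname{Ran}\mathcal{L}_1+\operatorname{Ran}\mathcal{L}_2$, where $\mathcal{L}_1u=\int_{t_{k-1}}^{t_k}T_q(t_k-s)Bu(s)\,ds$ and $\mathcal{L}_2v=S_q(t_k-t_{k-1})Dv(t_{k-1}^-)$ are bounded and linear; thus \eqref{e3} is approximately controllable on $I$ if and only if $\overline{\operatorname{Ran}\mathcal{L}_1+\operatorname{Ran}\mathcal{L}_2}=X$. A short computation using \eqref{e4} gives $\mathcal{L}_i\mathcal{L}_i^{\ast}=\Psi^{t_k}_{t_{k-1},i}$, so that $\overline{\operatorname{Ran}\mathcal{L}_i}=\overline{\operatorname{Ran}\Psi^{t_k}_{t_{k-1},i}}$, and the two indices $i=1,2$ can be treated separately and symmetrically from here on.

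For the sufficiency part, assume $\lambda\mathcal{R}(\lambda,\Psi^{t_k}_{t_{k-1},i})\to0$ strongly as $\lambda\to0^+$, $i=1,2$. Given $x_1\in X$ and $\epsilon>0$, set $p=x_1-\xi_k$, write $p=p_1+p_2$ (for instance $p_1=p$, $p_2=0$), and for $\lambda>0$ choose the steering controls
\begin{equation*}
u_\lambda(s)=B^{\ast}T_q^{\ast}(t_k-s)\,\mathcal{R}(\lambda,\Psi^{t_k}_{t_{k-1},1})\,p_1,\qquad v_\lambda(t_{k-1}^-)=D^{\ast}S_q^{\ast}(t_k-t_{k-1})\,\mathcal{R}(\lambda,\Psi^{t_k}_{t_{k-1},2})\,p_2.
\end{equation*}
Then $\mathcal{L}_1u_\lambda=\Psi^{t_k}_{t_{k-1},1}\mathcal{R}(\lambda,\Psi^{t_k}_{t_{k-1},1})p_1$, and likewise for $v_\lambda$, and the elementary identity $\Psi\,\mathcal{R}(\lambda,\Psi)=I-\lambda\mathcal{R}(\lambda,\Psi)$ yields
\begin{equation*}
x_1-x_{t_k}\bigl(\cdot;u_\lambda,v_\lambda\bigr)=\lambda\mathcal{R}(\lambda,\Psi^{t_k}_{t_{k-1},1})p_1+\lambda\mathcal{R}(\lambda,\Psi^{t_k}_{t_{k-1},2})p_2\longrightarrow0\quad(\lambda\to0^+).
\end{equation*}
Choosing $\lambda$ small enough makes the right-hand side of norm less than $\epsilon$, so $x_1\in\overline{\mathfrak{R}(t_k,\cdot)}$ and \eqref{e3} is approximately controllable on $I$.

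For the necessity part I would argue by duality: if \eqref{e3} is approximately controllable then $\operatorname{Ran}\mathcal{L}_1+\operatorname{Ran}\mathcal{L}_2$ is dense, so every functional $x^{\ast}$ annihilating it vanishes; since $\langle x^{\ast},\mathcal{L}_iu\rangle=\langle\mathcal{L}_i^{\ast}x^{\ast},u\rangle$ one has $\langle\Psi^{t_k}_{t_{k-1},i}x^{\ast},x^{\ast}\rangle=\Vert\mathcal{L}_i^{\ast}x^{\ast}\Vert^2$, so $\Psi^{t_k}_{t_{k-1},i}$ has dense range, and one then invokes the classical fact that, for a nonnegative bounded self-adjoint operator $\Psi$, $\lambda(\lambda I+\Psi)^{-1}\to0$ strongly if and only if $\overline{\operatorname{Ran}\Psi}=X$. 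I expect the main obstacle to lie exactly in this last equivalence together with the handling of the adjoints $T_q^{\ast},S_q^{\ast},B^{\ast},D^{\ast}$: these tacitly require $X$ to be a Hilbert (or at least reflexive) space for the self-adjointness and orthogonal-complement argument to be legitimate, and they also force one to read Definition~\ref{Definition:2.7} as controllability through each control channel, so that the conditions for $i=1$ and $i=2$ genuinely decouple; the remaining bookkeeping that isolates the control-free vector $\xi_k$ across the impulse instants $t_1,\ldots,t_{k-1}$ is routine. Since the statement is quoted from \cite{r20}, I would refer there for the complete details.
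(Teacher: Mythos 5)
The paper offers no proof of Lemma~\ref{Lemma:2.8} --- it is quoted from \cite{r20}, where the argument is precisely the resolvent/duality scheme you outline: write the reachable state at $t_k$ as a control-independent vector plus $\mathcal{L}_1u+\mathcal{L}_2v$, use $\Psi^{t_k}_{t_{k-1},i}=\mathcal{L}_i\mathcal{L}_i^{\ast}$ together with the identity $\Psi\mathcal{R}(\lambda,\Psi)=I-\lambda\mathcal{R}(\lambda,\Psi)$ for sufficiency, and Mahmudov's classical equivalence between $\lambda(\lambda I+\Psi)^{-1}\to 0$ strongly and $\overline{\operatorname{Ran}\Psi}=X$ for necessity. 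Your reconstruction is correct, and the two caveats you flag are genuine features of the statement as quoted rather than gaps in your argument: the self-adjointness of $\Psi^{t_k}_{t_{k-1},i}$ and the identification $\overline{\operatorname{Ran}\Psi}=(\ker\Psi)^{\perp}$ tacitly require a Hilbert-space structure on $X$ (the paper says Banach), and the ``only if'' direction for each $i=1,2$ separately holds only if Definition~\ref{Definition:2.7} is read channel-wise, since otherwise a dense $\operatorname{Ran}\mathcal{L}_1$ alone already yields approximate controllability while $\lambda\mathcal{R}(\lambda,\Psi^{t_k}_{t_{k-1},2})$ need not tend to zero.
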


\begin{lemma}[Krasnoselskii theorem \cite{r23}]
\label{Lemma:2.9}
Let $X$ be a Banach space and $E$ be a
bounded, closed, and convex subset of $X$. 
Let $Q_{1}, Q_{2}$ be maps of $E$ into $X$ such that
$Q_{1}x + Q_{2}y \in E$ for every $x, y \in E$. 
If $Q_{1}$ is a contraction and $Q_{2}$ is compact and continuous,
then equation $Q_{1}x + Q_{2}x = x$ has a solution on $E$.
\end{lemma}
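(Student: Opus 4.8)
The plan is to reduce the statement to the Banach and Schauder fixed point theorems by a standard ``freezing'' argument. First I would fix an arbitrary $y\in E$ and consider the map $x\mapsto Q_{1}x+Q_{2}y$. By hypothesis this map sends $E$ into $E$, and since $Q_{1}$ is a contraction, say with constant $k\in[0,1)$, it is a $k$-contraction of the complete metric space $E$ into itself. Hence the Banach contraction principle yields a unique point $x=\phi(y)\in E$ with $\phi(y)=Q_{1}\phi(y)+Q_{2}y$. This defines a map $\phi\colon E\to E$, and a solution of $Q_{1}x+Q_{2}x=x$ is precisely a fixed point of $\phi$, because $\phi(x)=x$ forces $x=Q_{1}x+Q_{2}x$.

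Next I would verify that $\phi$ is continuous and that $\phi(E)$ is relatively compact, so that a Schauder-type theorem applies. For continuity, if $y_{n}\to y$ in $E$ then from $\phi(y_{n})-\phi(y)=(Q_{1}\phi(y_{n})-Q_{1}\phi(y))+(Q_{2}y_{n}-Q_{2}y)$ and the contraction estimate one obtains $\Vert\phi(y_{n})-\phi(y)\Vert\le\frac{1}{1-k}\Vert Q_{2}y_{n}-Q_{2}y\Vert$, which tends to $0$ by continuity of $Q_{2}$. For compactness, note that $g:=I-Q_{1}$ is injective on $E$ with $\Vert g(x_{1})-g(x_{2})\Vert\ge(1-k)\Vert x_{1}-x_{2}\Vert$, hence has a Lipschitz inverse on its range; since $\phi(y)-Q_{1}\phi(y)=Q_{2}y$, we get $\phi(y)=g^{-1}(Q_{2}y)$. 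Thus, given any sequence $(y_{n})$ in $E$, compactness of $Q_{2}$ gives a convergent subsequence $Q_{2}y_{n_{j}}$, and the Lipschitz bound on $g^{-1}$ makes $\phi(y_{n_{j}})=g^{-1}(Q_{2}y_{n_{j}})$ Cauchy, hence convergent. Therefore $\phi(E)$ has compact closure.

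Finally, $E$ is nonempty, closed, bounded and convex, $\phi\colon E\to E$ is continuous, and $\phi(E)$ is relatively compact; Schauder's fixed point theorem (if preferred, applied to $\phi$ restricted to the compact convex set $\overline{\mathrm{co}}\,\phi(E)\subseteq E$) yields a point $x^{\star}$ with $x^{\star}=\phi(x^{\star})$, whence $x^{\star}=Q_{1}x^{\star}+Q_{2}x^{\star}\in E$, which is the desired solution.

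The main obstacle I anticipate is the relative compactness of $\phi(E)$: one has to observe that even though $Q_{1}$ is nonlinear, $I-Q_{1}$ remains a homeomorphism onto its image with a Lipschitz inverse (a direct consequence of the contraction estimate), which is exactly what transfers the compactness of $Q_{2}$ to $\phi$. Once this is in place, the contraction step, the continuity estimate, and the appeal to Schauder's theorem are all routine.
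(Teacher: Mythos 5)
Your proof is correct: the ``freezing'' argument (Banach's principle applied to $x\mapsto Q_{1}x+Q_{2}y$ for fixed $y$, the estimate $\Vert\phi(y_{n})-\phi(y)\Vert\le\tfrac{1}{1-k}\Vert Q_{2}y_{n}-Q_{2}y\Vert$, the Lipschitz invertibility of $I-Q_{1}$ to transfer compactness from $Q_{2}$ to $\phi$, and Schauder on $\overline{\mathrm{co}}\,\phi(E)$) is the classical proof of Krasnoselskii's theorem, and every step checks out. Note that the paper does not prove this lemma at all --- it is quoted from the cited reference --- so there is no internal argument to compare against; your write-up is a complete and standard derivation of the cited result.
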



\section{Existence of a mild solution}
\label{sec:3}

We prove existence for system \eqref{e1}. 
Define $K_i^*= \sup_{t\in I} \int_{t_{i-1}}^{t_{i}}m(t, s) ds<\infty$, 
$i=1,\dots,m+1$. For any $r > 0$, let 
$\Omega_{r} := \lbrace x \in PC(I, X)\vert \|x\|\leq r\rbrace$. 
We make the following assumptions:
\begin{itemize}
\item[(H$_1$)] The operators $S_{q}(t)_{t\geq0}$ and $T_{q}(t)_{t\geq0}$, 
generated by $A$, are bounded and compact, such that
$\sup_{t\in I}\|S_{q}(t)\|\leq M$ and $\sup_{t\in I}\|T_{q}(t)\|\leq M$.

\item[(H$_2$)] The non-linearity $f: I\times X\times X\to X$ 
is continuous and compact; there exist functions $\mu_{i}\in
L^{\infty}(I, \mathbb{R}^{+})$, $i=1,2,3$, and positive constants 
$\alpha_{1}$ and $\alpha_{2}$ such that
$\|f (t, x, y)\| \leq \mu_{1}(t) + \mu_{2}(t)\|x\| + \mu_{3}(t)\|y\|$
and
$\|f (t, x, Hx) - f (t, y, Hy)\| = \alpha_{1}\|x - y\| + \alpha_{2}\|Hx - Hy\|$.

\item[(H$_3$)] Function $g: PC(I, X)\to X$ is completely 
continuous and there exists a positive
constant $\beta$ such that
$\|g(x)-g(y)\|\leq\beta\|x-y\|$, $x,y\in X$.

\item[(H$_4$)] Associated with $h: \Delta\times X\to X$, there exists 
$m(t, s)\in PC(\Delta, \mathbb{R}^{+})$ such that
$\|h(t, s, x(s))\| \leq m(t, s)\|x\|$
for each $(t, s)\in\Delta$ and $x,y\in X$, where 
$\Delta=\lbrace (t, s)\in \mathbb{R}^{2}\vert t_i
\leq s$, $t\leq t_{i+1}$, $i=0,\ldots,m\rbrace$.

\item[(H$_5$)] For every $x_1, x_2, x\in X$ and $t\in (t_i, t_{i+1}]$, 
$i=1,\ldots,m$, $I_i$ are continuous and compact and there exist 
positive constants $d_i$, $e_i$ such that
$$
\Vert I_i(x_1(t_i^-))-I_i(x_2(t_i^-))\Vert
\leq d_i\sup\limits_{t\in (t_i, t_{i+1}]}\Vert x_1(t)-x_2(t)\Vert 
$$
and
$\Vert I_i(x(t_i^-))\Vert\leq e_i\sup\limits_{t\in (t_i, t_{i+1}]}\Vert x(t)\Vert$.
\end{itemize}

\begin{theorem}
\label{Theorem:2.10}
Let $x_{0}\in X$. If conditions (H$_1$)--(H$_5$) hold, 
then the impulsive non-local fractional control system \eqref{e1} 
has a fixed point on $I$ provided $M\beta<1$ 
and $M(1+d_{i})<1$, $i=1,\ldots,m$, that is,
\eqref{e1} has at least one mild solution
on $t\in [0, b]\setminus\lbrace t_1,\ldots,t_m\rbrace$.
\end{theorem}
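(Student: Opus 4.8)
The plan is to apply Krasnoselskii's fixed point theorem (Lemma~\ref{Lemma:2.9}) on a suitable ball $\Omega_r\subset PC(I,X)$, after splitting the solution operator into a contraction part and a compact part. First I would fix the control functions $u,v\in L^2(I,U)$ (for the existence statement they play the role of given data) and, using the mild-solution formula of Definition~\ref{Definition:2.6}, define on each subinterval the operator $\Phi x$ piecewise: on $[0,t_1]$ by $\Phi x(t)=S_q(t)(x_0-g(x))+\int_0^t T_q(t-s)(f(s,x(s),(Hx)(s))+Bu(s))\,ds$, and on $(t_i,t_{i+1}]$ by the corresponding formula involving $S_q(t-t_i)[x(t_i^-)+I_i(x(t_i^-))+Dv(t_i^-)]$ plus the integral term. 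I would then write $\Phi=Q_1+Q_2$, where $Q_1$ collects the terms that are only Lipschitz (the $-S_q(t)g(x)$ term on $[0,t_1]$ and the $S_q(t-t_i)[x(t_i^-)+I_i(x(t_i^-))]$ terms on the later subintervals), and $Q_2$ collects everything built from $S_q(t)x_0$, the impulsive control jumps $Dv(t_i^-)$, and the integral $\int T_q(t-s)(f+Bu)\,ds$.

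Next I would verify the three hypotheses of Lemma~\ref{Lemma:2.9}. For the self-mapping property $Q_1x+Q_2y\in\Omega_r$: using (H$_1$)--(H$_5$) one bounds $\|\Phi x(t)\|$ by $M\|x_0\|+M\|g(x)\|+M\sum(1+d_i)\|x\|_{PC}$-type terms plus $M b(\|\mu_1\|_\infty+(\|\mu_2\|_\infty+\|\mu_3\|_\infty K^*)r)+Mb\|B\|\|u\|_{L^2}b^{1/2}$ and the impulsive control contributions $M\|D\|\sum\|v(t_i^-)\|$; because the coefficient multiplying $r$ coming from $\mu_2,\mu_3$ and from the Lipschitz/impulsive pieces can be made $<1$ under $M\beta<1$ and $M(1+d_i)<1$ (more precisely one needs the total growth constant strictly below $1$, which the statement's hypotheses are designed to guarantee together with the compactness of $f$ and $g$ that keeps the non-Lipschitz bound sublinear), a sufficiently large $r$ works. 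For the contraction property of $Q_1$: from (H$_3$) and (H$_5$), $\|Q_1x-Q_1y\|_{PC}\le \max\{M\beta,\;M(1+d_i)\}\,\|x-y\|_{PC}<1\cdot\|x-y\|_{PC}$, so $Q_1$ is a strict contraction. For compactness and continuity of $Q_2$: continuity follows from continuity of $f$, $h$, the $I_i$, dominated convergence, and strong continuity of $S_q,T_q$; equiboundedness is already in the self-map estimate; equicontinuity on each $(t_i,t_{i+1}]$ comes from the compactness (hence norm-continuity in $t$) of the operators $S_q(t),T_q(t)$ together with the standard splitting of the integral $\int_0^t=\int_0^{t-\varepsilon}+\int_{t-\varepsilon}^t$; and precompactness of the section $\{Q_2x(t):x\in\Omega_r\}$ in $X$ for each fixed $t$ follows from compactness of $T_q(t)$ and of $f$ (and of $S_q$, $I_i$). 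By the Arzel\`a--Ascoli theorem in $PC(I,X)$, $Q_2$ is compact.

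The main obstacle I expect is the equicontinuity/precompactness argument for $Q_2$ at and near the impulse points and at $t=0$: one must treat each subinterval $(t_i,t_{i+1}]$ separately, handle the one-sided limits $x(t_i^+)$ carefully so that $Q_2x\in PC(I,X)$, and control the integral term $\int_{t_i}^{t}T_q(t-s)f(s,x(s),(Hx)(s))\,ds$ uniformly in $x\in\Omega_r$ — the usual $\varepsilon$-splitting needs compactness of $T_q(t)$ for $t>0$ but $T_q(t)$ need not be compact at $t=0$, so the piece $\int_{t-\varepsilon}^t$ must be estimated purely by the growth bound in (H$_2$) and made small by shrinking $\varepsilon$. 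A secondary technical point is making the bound in the self-mapping step genuinely close under the stated smallness conditions $M\beta<1$, $M(1+d_i)<1$; here the compactness of $f$ in (H$_2$) is what lets the $f$-contribution be absorbed (the growth estimate times $b$ does not by itself give a coefficient $<1$, so one leans on $\|f\|$ being bounded on bounded sets by compactness, i.e.\ the growth functions $\mu_i$ are used only for a fixed-$r$ estimate, not for a contraction). Once these estimates are in place, Lemma~\ref{Lemma:2.9} yields a fixed point $x=\Phi x$ in $\Omega_r$, which by Definition~\ref{Definition:2.6} is precisely a mild solution of \eqref{e1} on $[0,b]\setminus\{t_1,\dots,t_m\}$, completing the proof.
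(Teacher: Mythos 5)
Your proposal is correct in outline and follows essentially the same route as the paper: Krasnoselskii's theorem (Lemma~\ref{Lemma:2.9}) on $\Omega_r$, with $Q_1$ collecting the non-local term $S_q(t)(x_0-g(x))$ and the jump terms $S_q(t-t_i)[x(t_i^-)+I_i(x(t_i^-))+\cdots]$ (contraction under $M\beta<1$, $M(1+d_i)<1$) and $Q_2$ collecting the integral terms (continuity, uniform boundedness, equicontinuity, and precompactness of the sections via the truncation $\int_{t_i}^{t-h}$ and compactness of $T_q(h)$, then Arzel\`a--Ascoli). The one substantive divergence is your treatment of the controls: you freeze $u,v\in L^2(I,U)$ as given data, whereas the paper substitutes the feedback controls \eqref{e6}, $u=B^{\ast}T_q^{\ast}(t_k-t)\mathcal{R}(\lambda,\Psi^{t_k}_{t_{k-1},1})P_1^k(x(\cdot))$ and the analogous $v$, so that $Q_2$ (and the jump term) depend on $x$ also through the control; this forces the extra estimates \eqref{e7} on $\|u\|,\|v\|$ and an additional layer in the continuity proof of $Q_2$. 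Your version is simpler and proves the literal existence statement, but it does not produce the $\lambda$-parametrized fixed point $\overline{x}^{\lambda}$ associated with \eqref{e6} that Theorem~\ref{Theorem:2.11} takes as its starting point, so as written it would not plug into the approximate controllability argument. One further caution: your invariance step ($Q_1x+Q_2y\in\Omega_r$) is only sketched, and the claim that compactness of $f$ lets you absorb the $f$-contribution is not quite right --- a compact map is bounded on bounded sets but the bound still depends on $r$, so one must genuinely check that the full estimate is $\le r$ for some $r$; the paper is equally loose here (it only exhibits bounds $\xi_1,\xi_2$ without comparing them to $r$), so this is a shared weakness rather than an error specific to your proof.
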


\begin{proof}
Define the operators $Q_{1}$ and $Q_{2}$ on $\Omega_{r}$ as follows:
\begin{equation*}
(Q_{1}x)(t)=\left\lbrace 
\begin{array}{ll}
S_{q}(t)(x_{0}-g(x)), \quad t\in[0,t_{1}]\\
S_{q}(t-t_{i})[x(t_{i}^{-})+I_{i}(x(t_{i}^{-}))+Dv(t_{i}^{-})], 
\quad  t\in(t_{i},t_{i+1}], 
\end{array}\right.
\end{equation*}
\begin{equation*}
(Q_{2}x)(t)=\left\lbrace 
\begin{array}{ll}
\int_{0}^{t} T_{q}(t-s))(f(s,x(s),(Hx)(s))+Bu(s))ds,  
\quad t\in[0,t_{1}],\\
\int_{t_{i}}^{t} T_{q}(t-s))(f(s,x(s),(Hx)(s))+Bu(s))ds, 
\quad t\in(t_{i},t_{i+1}],
\end{array}
\right. 
\end{equation*}
$i=1,\ldots,m$. We take the controls
\begin{equation}
\label{e6}
\begin{split}
u&=B^{\ast}T_{q}^{\ast}(t_k-t)
\mathcal{R}(\lambda, \Psi^{t_k}_{t_{k-1},1})P_{1}^{k}(x(\cdot)),\\
v&=D^{\ast}S_{q}^{\ast}(t_k-t_{k-1})
\mathcal{R}(\lambda, \Psi^{t_k}_{t_{k-1},2})P_{2}^{k}(x(\cdot)),
\end{split}
\end{equation}
where
\begin{equation*}
P_{1}^{k}\left(x(\cdot)\right)
=
\begin{cases}
x_1-S_{q}(t_1)(x_0-g(x))\\
~~~~~-\int_{0}^{t_1}T_{q}(t_1-s)f(s, x(s), (Hx)(s))ds,\quad  k=1,\\[0.3cm]
x_k-S_{q}(t_k-t_{k-1})[x(t_{k-1}^-)+I_{k-1}(x(t_{k-1}^-))]\\
~~~~~-\int_{t_{k-1}}^{t_k}T_{q}(t_k-s)f(s, x(s), (Hx)(s))ds,
\quad  k=2,\ldots,m+1,
\end{cases}
\end{equation*}
\begin{equation*}
P_{2}^{k}\left(x(\cdot)\right)
=
\begin{cases}
x_k-S_{q}(t_k-t_{k-1})[x(t_{k-1}^-)+I_{k-1}(x(t_{k-1}^-))]\\
~~~~~-\int_{t_{k-1}}^{t_k}T_{q}(t_k-s)f(s, x(s), (Hx)(s))ds,
\quad k=2,\ldots,m+1.
\end{cases}
\end{equation*}
For any $\lambda>0$, we shall show that $Q_{1}+Q_{2}$ has a fixed point 
on $\Omega_{r}$, which is a solution of system \eqref{e1}. According 
to \eqref{e6}, together with \eqref{e4} and \eqref{e5}, we have
\begin{equation}
\label{e7}
\|u(t)\|\leq \dfrac{1}{\lambda}M\Vert B\Vert\|P_{1}(x(\cdot))\| 
~\text{and} ~ \|v(t)\|\leq \dfrac{1}{\lambda}M\Vert D\Vert\|P_{2}(x(\cdot))\|.
\end{equation}
Using assumptions $(H_{1})$--$(H_{5})$, we get
\begin{equation*}
\begin{split}
\|P^1_{1}(x(\cdot))\|
&\leq \|x_{1}\| +\| S_{q}(t_{1})\|\|(x_{0} - g(x))\|\\
&\quad +\int_{0}^{t_{1}}\|T_{q}(t_{1}-s)\|\|f(s,x(s),(Hx)(s))\|ds\\
&\leq \|x_{1}\| +M(\|x_{0}\| +\| g(x)\|)\\
&\quad +Mt_{1}\left(\|\mu_{1}\|_{L^{\infty}(I, \mathbb{R}^{+})} 
+ r\|\mu_{2}\|_{L^{\infty}(I, \mathbb{R}^{+})} 
+ K_{1}^{*}r\|\mu_{3}\|_{L^{\infty}(I, \mathbb{R}^{+})}\right)\\
&\leq \|x_{1}\| +M\|x_{0}\| +M\beta \|x\| +M\Vert g(0)\Vert\\
&\quad +Mt_{1}\left(\|\mu_{1}\|_{L^{\infty}(I, \mathbb{R}^{+})} 
+ r\|\mu_{2}\|_{L^{\infty}(I, \mathbb{R}^{+})} 
+ K_{1}^{*}r\|\mu_{3}\|_{L^{\infty}(I, \mathbb{R}^{+})}\right)
\end{split}
\end{equation*}
and, for $k=2,\ldots,m+1$,
\begin{equation*}
\begin{split}
\|P^k_{1}(x(\cdot))\|
&\leq 
\|x_{k}\| +\| S_{q}(t_{k}-t_{k-1})\|[\Vert x(t_{k-1}^{-})\Vert + \|I_{k-1}(x(t_{k-1}^{-}))\|]\\
&\quad +\int_{t_{k-1}}^{t_{k}}\|T_{q}(t_{k}-s)\|\|f(s,x(s),(Hx)(s))\|ds\\
&\leq \|x_{k}\| +M(\Vert x(t_{k-1}^{-})\Vert+e_{i}\|x\|) + M(t_{k}-t_{k-1})\\
&\quad \times \left(\|\mu_{1}\|_{L^{\infty}(I, \mathbb{R}^{+})} 
+ r\|\mu_{2}\|_{L^{\infty}(I, \mathbb{R}^{+})} + K_{k}^{*}r\|
\mu_{3}\|_{L^{\infty}(I, \mathbb{R}^{+})}\right)\\
&\leq \|x_{k}\| +M(\Vert x(t_{k-1}^{-})\Vert+re_{i}) +M(t_{k}-t_{k-1})\\
&\quad \times \left(\|\mu_{1}\|_{L^{\infty}(I, \mathbb{R}^{+})} 
+ r\|\mu_{2}\|_{L^{\infty}(I, \mathbb{R}^{+})} 
+ K_{k}^{*}r\|\mu_{3}\|_{L^{\infty}(I, \mathbb{R}^{+})}\right).
\end{split}
\end{equation*}
Similarly, we get
\begin{multline*}
\|P^k_{2}(x(\cdot))\|\leq 
\|x_{k}\| +M(\Vert x(t_{k-1}^{-})\Vert+re_{k-1})\\
+M(t_{k}-t_{k-1})\left(\|\mu_{1}\|_{L^{\infty}(I, \mathbb{R}^{+})} 
+ r\|\mu_{2}\|_{L^{\infty}(I, \mathbb{R}^{+})} 
+ K_{k}^{*}r\|\mu_{3}\|_{L^{\infty}(I, \mathbb{R}^{+})}\right),
\end{multline*}
$k=2,\ldots,m+1$. For any $x \in \Omega_{r}$, we obtain 
\begin{equation*}
\begin{split}
\|(Q_{1}&x)(t) + (Q_{2}x)(t)\|\\ 
&\leq M\left(\|x_{0}\|+\|g(x)\|\right)\\
&\quad + Mt_{1}\left(\|\mu_{1}\|_{L^{\infty}(I, \mathbb{R}^{+})} 
+ r\|\mu_{2}\|_{L^{\infty}(I, \mathbb{R}^{+})} 
+ K_{1}^{*}r\|\mu_{3}\|_{L^{\infty}(I, \mathbb{R}^{+})}
+\Vert B\Vert \Vert u\Vert\right)\\
&\leq M(\|x_{0}\|+\beta r+\|g(0)\|)\\
&\quad + Mt_{1}(\|\mu_{1}\|_{L^{\infty}(I, \mathbb{R}^{+})} 
+ r\|\mu_{2}\|_{L^{\infty}(I, \mathbb{R}^{+})} 
+ K_{1}^{*}r\|\mu_{3}\|_{L^{\infty}(I, \mathbb{R}^{+})}
+\Vert B\Vert \Vert u\Vert)
\end{split}
\end{equation*}
for $t\in [0, t_1]$, and
\begin{equation*}
\begin{split}
\|(Q_{1}&x)(t) + (Q_{2}x)(t)\|\\
&\leq M\left(\|x(t^-_{k-1})\|+e_{k-1}\Vert x\Vert+\Vert D\Vert\|v(t_{k-1}^{-})\|\right)
+ M(t_{k}-t_{k-1})\\
&\quad \times \left(\|\mu_{1}\|_{L^{\infty}(I, \mathbb{R}^{+})} 
+ r\|\mu_{2}\|_{L^{\infty}(I, \mathbb{R}^{+})} + K_{k}^{*}r\|\mu_{3}\|_{L^{\infty}(I, 
\mathbb{R}^{+})}+\Vert B\Vert \Vert u\Vert\right)\\
&\leq M(\|x(t^-_{k-1})\|+e_{k-1}r+\Vert D\Vert\|v(t_{k-1}^{-})\|
+ M(t_{k}-t_{k-1}) \\
&\quad \times \left(\|\mu_{1}\|_{L^{\infty}(I, \mathbb{R}^{+})} 
+ r\|\mu_{2}\|_{L^{\infty}(I, \mathbb{R}^{+})} 
+ K_{k}^{*}r\|\mu_{3}\|_{L^{\infty}(I, \mathbb{R}^{+})}
+\Vert B\Vert \Vert u\Vert\right)
\end{split}
\end{equation*}
for $t\in (t_{k-1}, t_k]$. By the inequalities \eqref{e7}, 
we can find $\xi_1, \xi_2>0$ such that
\begin{equation*}
\|(Q_{1}x)(t) + (Q_{2}x)(t)\|
\leq
\begin{cases}
\xi_{1}, t\in [0, t_{1}],\\
\xi_{2}, t\in (t_{k-1},t_{k}], k=2,\ldots,m+1.
\end{cases}
\end{equation*}
Hence, $Q_{1}x+Q_{2}x$ is bounded. Now, let $x, y \in \Omega_{r}$. We have
\begin{equation*}
\|(Q_{1}x)(t) - (Q_{1}y)(t)\| 
\leq \|S_{q}(t)\| \|g(x)-g(y)\|
\leq M\beta \|x-y\|
\end{equation*}
for $t\in [0, t_{1}]$ and
\begin{equation*}
\begin{split}
\|&(Q_{1}x)(t) - (Q_{1}y)(t)\| \\
&\leq \|S_{q}(t-t_{k-1})\|[\Vert x(t^-_{k-1})-y(t^-_{k-1})\Vert
+\|I_{k-1}(x(t^-_{k-1}))-I_{k-1}(y(t^-_{k-1}))\|]\\
&\leq M\left[\Vert x(t^-_{k-1})-y(t^-_{k-1})\Vert+d_{k-1} \|x-y\|\right]
\end{split}
\end{equation*}
for $t\in (t_{k-1}, t_{k}]$, $k=2,\ldots,m+1$.
Since $M\beta<1$ and $M(1+d_{k-1})<1$, $k=2,\ldots,m+1$, 
it follows that $Q_{1}$ is a contraction mapping.
Let $\lbrace x_{n}\rbrace$ be a sequence in $\Omega_{r}$ 
such that $x_{n}\to x \in \Omega_{r}$. Since $f$ and $g$ are continuous, i.e.,
for all $\epsilon>0$, there exists a positive integer $n_0$, such that for $n>n_0$
$\|f (s, x_{n}(s), (Hx_{n})(s)) - f (s, x(s), (Hx)(s)) \|\leq\epsilon$ 
and $\|g(x_{n}) - g(x)\|\leq\epsilon$, the continuity of $I_i(x)$ 
on $(t_i, t_{i+1}]$ gives 
$\Vert I_i(x_n(t_i^-))-I_i(x(t_i^-))\Vert\leq \epsilon$,
$i=1,\ldots,m$. Now, for all $t \in[0, t_{1}]$,
\begin{align*}
&\|(Q_{2}x_{n})(t) - (Q_{2}x)(t)\|\\
&\leq\int_{0}^{t_{1}}\|T_{q}(t-\tau)\|\|BB^{*}T_{q}^{*}(t_{1}-\tau)
\mathcal{R}(\lambda, \Psi^{t_1}_{t_{0}, 1})\|
\Bigl[\|S_{q}(t_{1})(g(x_{n})-g(x))\|\\
&~~+\int_{0}^{t_{1}}\|
T_{q}(t_{1}-s)\|f (s, x_{n}(s), (Hx_{n})(s)) - f (s, x(s), (Hx)(s))\|ds\Bigr]d\tau\\
&~~+\int_{0}^{t_{1}}\|T_{q}(t-s)\|\|f (s, x_{n}(s), (Hx_{n})(s)) 
- f (s, x(s), (Hx)(s))\| ds\\
&\leq \dfrac{\epsilon}{\lambda}M^{3}\Vert B\Vert^2 t_{1}(2t_1+1).
\end{align*}
Moreover, for all $t\in (t_{i}, t_{i+1}]$, $i=1,\ldots,m$, one has
\begin{align*}
\|&(Q_{2}x_{n})(t) - (Q_{2}x)(t)\|\\
&\leq\int_{t_{i}}^{t}\|T_{q}(t-\tau)\|\|BB^{*}T_{q}^{*}(t_{i+1}
-\tau)\mathcal{R}(\lambda, \Psi^{t_{i+1}}_{t_{i}, 1})\|\\
&~~\times\biggl[\|S_{q}(t_{i+1}-t_{i})[x_n(t_i^-)-x(t_i^-)
+I_i(x_n(t_i^-))-I_i(x(t_i^-))]\|\\
&~~~+\int_{t_{i}}^{t_{i+1}}\|T_{q}(t_{i+1}-s)\|f (s, x_{n}(s), 
(Hx_{n})(s)) - f (s, x(s), (Hx)(s))\|ds)\biggr]d\tau\\
&~~+\int_{t_{i}}^{t}\|T_{q}(t-s)\|\|(f (s, x_{n}(s), 
(Hx_{n})(s)) - f (s, x(s), (Hx)(s))\| ds\\  
&\leq
\dfrac{2\epsilon}{\lambda}M^{3}\Vert B\Vert^2 
(t_{i+1}-t_i)(t_{i+1}-t_i+1).
\end{align*}
Therefore, $Q_{2}$ is continuous. Next, we prove the compactness 
of $Q_{2}$. For that, we first show that the set
$\left\lbrace (Q_{2}x)(t): x\in\Omega_{r}\right\rbrace$ 
is relatively compact in $PC(I, X)$.
By the assumptions of our theorem, we have
\begin{equation*}
\|(Q_{2}x)(t)\|
\leq Mt_{1}(\|\mu_{1}\|_{L^{\infty}(I, \mathbb{R}^{+})} 
+ r\|\mu_{2}\|_{L^{\infty}(I, \mathbb{R}^{+})} 
+ K_{1}^{*}r\|\mu_{3}\|_{L^{\infty}(I, \mathbb{R}^{+})}
+\Vert B\Vert \Vert u\Vert),
\end{equation*}
for $t\in [0, t_1]$, and
\begin{multline*}
\|(Q_{2}x)(t)\|
\leq 
M(t_{k}-t_{k-1})\\
\times \left(\|\mu_{1}\|_{L^{\infty}(I, \mathbb{R}^{+})} 
+ r\|\mu_{2}\|_{L^{\infty}(I, \mathbb{R}^{+})} + K_{k}^{*}r\|
\mu_{3}\|_{L^{\infty}(I, \mathbb{R}^{+})}
+\Vert B\Vert \Vert u\Vert\right), 
\end{multline*}
for $t\in (t_{k-1}, t_k]$, which gives the uniformly boundedness of 
$\left\lbrace (Q_{2}x)(t): x\in \Omega_{r}\right\rbrace$. 
We now show that $Q_{2}(\Omega_{r})$ is equicontinuous. 
Functions $\left\lbrace (Q_{2}x)(t): x\in \Omega_{r}\right\rbrace$ 
are equicontinuous at $t=0$. For any $x \in\Omega_{r}$, 
if $0 < r_{1} < r_{2} \leq t_{1}$, then
\begin{align*}
\|(Q_{2}&x)(r_{2}) - (Q_{2}x)(r_{1})\|\\
&\leq\int_{0}^{r_{1}}\Vert T_{q}(r_{2}-s)-T_{q}(r_{1}-s)
\Vert [\Vert Bu(s)\Vert + \Vert f(s, x(s), (Hx)(s))\Vert]ds\\
&\quad + \int_{r_{1}}^{r_{2}}\Vert T_{q}(r_{2}-s)\Vert [\Vert Bu(s)\Vert 
+ \Vert f(s, x(s), (Hx)(s))\Vert]ds \\
&\leq [r_1\Vert T_{q}(r_{2}-s)-T_{q}(r_{1}-s)\Vert +M(r_2-r_1)]\\
&\quad \times\left(\Vert B\Vert \Vert u\Vert+\|\mu_{1}\|_{L^{\infty}(I, 
\mathbb{R}^{+})} + r\|\mu_{2}\|_{L^{\infty}(I, \mathbb{R}^{+})} 
+ K_{1}^{*}r\|\mu_{3}\|_{L^{\infty}(I, \mathbb{R}^{+})}\right).
\end{align*}
Similarly, if $t_{i}< r_{1} < r_{2} \leq t_{i+1}$, then
\begin{align*}
\|(Q_{2}&x)(r_{2}) - (Q_{2}x)(r_{1})\|\\
&\leq\int_{t_i}^{r_{1}}\Vert T_{q}(r_{2}-s)-T_{q}(r_{1}-s)
\Vert [\Vert Bu(s)\Vert + \Vert f(s, x(s), (Hx)(s))\Vert]ds\\
&\quad + \int_{r_{1}}^{r_{2}}\Vert T_{q}(r_{2}-s)\Vert [\Vert Bu(s)\Vert 
+ \Vert f(s, x(s), (Hx)(s))\Vert]ds \\
&\leq [(r_1-t_i)\Vert T_{q}(r_{2}-s)-T_{q}(r_{1}-s)\Vert +M(r_2-r_1)]\\
&\quad \times\left(\Vert B\Vert \Vert u\Vert+\|\mu_{1}\|_{L^{\infty}(I, 
\mathbb{R}^{+})} + r\|\mu_{2}\|_{L^{\infty}(I, \mathbb{R}^{+})} 
+ K_{i+1}^{*}r\|\mu_{3}\|_{L^{\infty}(I, \mathbb{R}^{+})}\right).
\end{align*}
From $(H_1)$, it follows the continuity of operator 
$T_q(\cdot)$ in the uniform operator topology. 
Thus, the right hand side of the above inequality tends to zero as $r_{2}\to r_{1}$. 
Therefore, $\left\lbrace (Q_{2}x)(t): x\in\Omega_{r}\right\rbrace $ 
is a family of equicontinuous functions. According to the infinite dimensional 
version of the Ascoli--Arzela theorem, it remains to prove that, for any 
$t\in [0, b]\setminus\lbrace t_1,\ldots,t_m\rbrace$, the set 
$V(t):=\left\lbrace (Q_{2}x)(t): x \in \Omega_{r}\right\rbrace $ is relatively compact in
$PC(I, X)$. The case $t=0$ is trivial: $V(0)=\left\lbrace (Q_{2}x)(0)
: x(\cdot)\in \Omega_{r}\right\rbrace $ is compact in $PC(I, X)$. Let
$t\in (0, t_1]$ be a fixed real number and $h$ be a given real number satisfying 
$0< h <t_{1}$. Define $V_{h}(t) = \left\lbrace (Q_{2}^{h}x)(t)
: x\in \Omega_{r}\right\rbrace$,
\begin{align*}
(Q_{2}^{h}x)(t)&=\int_{0}^{t-h}T_{q}(t-s)Bu(s) ds +\int_{0}^{t-h}T_{q}(t-s)f(s, x(s),(Hx)(s))ds\\
&=T_{q}(h)\int_{0}^{t-h}T_{q}(t-s-h)Bu(s) ds\\
&\quad +T_{q}(h)\int_{0}^{t-h}T_{q}(t-s-h)f(s, x(s),(Hx)(s))ds\\
&=T_{q}(h)y_{1}(t,h).
\end{align*}
We use same arguments, we fix $t\in (t_i, t_{i+1}]$, 
and let $h$ be a given real number satisfying  $t_{i}< h <t_{i+1}$, 
we define $V_{h}(t) = \left\lbrace (Q_{2}^{h}x)(t): x\in \Omega_{r}\right\rbrace$,
\begin{align*}
(Q_{2}^{h}x)(t)&=\int_{t_{i}}^{t-h}T_{q}(t-s)Bu(s) ds 
+\int_{t_{i}}^{t-h}T_{q}(t-s)f(s, x(s),(Hx)(s))ds\\
&=T_{q}(h)\int_{t_{i}}^{t-h}T_{q}(t-s-h)Bu(s) ds\\
&+T_{q}(h)\int_{t_{i}}^{t-h}T_{q}(t-s-h)f(s, x(s),(Hx)(s))ds\\
&=T_{q}(h)y_{2}(t,h).
\end{align*}
The compactness of $T_{q}(h)$ in $PC(I, X)$, together with the boundedness 
of both $y_{1}(t, h)$ and $y_{2}(t, h)$ on $\Omega_{r}$, give the relativity 
compactness of the set $V_{h}(t)$ in $PC(I, X)$. Moreover, for all $t\in [0, t_{1}]$,
\begin{align*}
\|(Q_{2}&x)(t) - (Q_{2}^{h}x)(t)\|\\
& \leq\int_{t-h}^{t}T_{q}(t-s)Bu(s) ds
+\int_{t-h}^{t}T_{q}(t-s)f(s, x(s),(Hx)(s))ds\\
&\leq hM\left(\Vert B\Vert \Vert u\Vert+\|\mu_{1}\|_{L^{\infty}(I, \mathbb{R}^{+})} 
+ r\|\mu_{2}\|_{L^{\infty}(I, \mathbb{R}^{+})} 
+ K_{1}^{*}r\|\mu_{3}\|_{L^{\infty}(I, \mathbb{R}^{+})}\right).
\end{align*}
Also, for all $t\in (t_{i}, t_{i+1}]$,
\begin{align*}
\|(Q_{2}&x)(t) - (Q_{2}^{h}x)(t)\|\\
&\leq\int_{t-h}^{t}T_{q}(t-s)Bu(s) ds 
+\int_{t-h}^{t}T_{q}(t-s)f(s, x(s),(Hx)(s))ds\\
&\leq hM\left(\Vert B\Vert \Vert u\Vert+\|\mu_{1}\|_{L^{\infty}(I, \mathbb{R}^{+})} 
+ r\|\mu_{2}\|_{L^{\infty}(I, \mathbb{R}^{+})} 
+ K_{i+1}^{*}r\|\mu_{3}\|_{L^{\infty}(I, \mathbb{R}^{+})}\right).
\end{align*}
Choose $h$ small enough. It implies that there are relatively 
compact sets arbitrarily close to the set $V(t)$ 
for each $t\in [0, b]\setminus\lbrace t_1,\ldots,t_m\rbrace$. 
Then, $V(t)$, $t\in [0, b]\setminus\lbrace t_1,\ldots,t_m\rbrace$, 
is relatively compact in $PC(I, X)$. Since it is compact at $t=0$, we have 
the relatively compactness of $V(t)$ in $PC(I, X)$ for all 
$t\in [0, b]\setminus\lbrace t_1,\ldots,t_m\rbrace$.
Hence, by the Arzela--Ascoli theorem, we conclude that $Q_{2}$ is compact. 
From Lemma~\ref{Lemma:2.9}, we ensure that the control system \eqref{e1} 
has at least one mild solution on $t\in [0, b]\setminus\lbrace t_1,\ldots,t_m\rbrace$.
\end{proof}


\section{Approximate controllability}
\label{sec:4}

In this section, with help of the obtained existence theorem of mild solutions,
we show an approximate controllability result for system \eqref{e1}.

\begin{theorem}
\label{Theorem:2.11}
If (H$_1$)--(H$_5$) are satisfied and
$\lambda\mathcal{R}(\lambda, \Psi^{t_k}_{t_{k-1}, i})\rightarrow 0$ 
in the strong operator topology as $\lambda\rightarrow 0^{+}$, $i=1,2$, 
then the impulsive non-local fractional control system \eqref{e1}
is approximately controllable on $t\in [0, b]\setminus\lbrace t_1,\ldots,t_m\rbrace$.
\end{theorem}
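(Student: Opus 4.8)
The plan is to follow the standard Fredholm-alternative-type argument for approximate controllability of semilinear fractional systems, adapted to the impulsive and nonlocal setting on each subinterval $(t_{k-1}, t_k]$. First I would fix $\lambda>0$ and invoke Theorem~\ref{Theorem:2.10} to obtain a mild solution $x^{\lambda}\in PC(I, X)$ of \eqref{e1} corresponding to the controls $u^{\lambda}$, $v^{\lambda}$ given by \eqref{e6}, with $P_1^k$, $P_2^k$ evaluated along $x^{\lambda}$. The key algebraic identity is that, by construction of these controls together with the definition \eqref{e5} of $\mathcal{R}(\lambda,\Psi^{t_k}_{t_{k-1},i})$ and the operator identity $\Psi\,\mathcal{R}(\lambda,\Psi)=I-\lambda\mathcal{R}(\lambda,\Psi)$, the value of the mild solution at the node $t_k$ satisfies $x^{\lambda}(t_k)=x_k-\lambda\mathcal{R}(\lambda,\Psi^{t_k}_{t_{k-1},1})P_1^k(x^{\lambda})$ on the portion driven by $u$, and analogously the jump contribution is corrected by $\lambda\mathcal{R}(\lambda,\Psi^{t_k}_{t_{k-1},2})P_2^k(x^{\lambda})$; combining them one gets, schematically,
\begin{equation*}
x^{\lambda}(t_k)=x_k-\lambda\mathcal{R}(\lambda,\Psi^{t_k}_{t_{k-1},1})P_1^k(x^{\lambda})
-\lambda\mathcal{R}(\lambda,\Psi^{t_k}_{t_{k-1},2})\bigl(P_2^k(x^{\lambda})-P_1^k(x^{\lambda})\bigr),
\end{equation*}
where $x_k$ is the prescribed target at $t_k$. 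So it suffices to show the two remainder terms tend to $0$ in $X$ as $\lambda\to 0^+$.

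The second step is the compactness argument that produces $\lambda$-independent bounds. Because $f$ is compact (H$_2$) and $g$, $I_i$ are completely continuous (H$_3$), (H$_5$), the set $\{f(s,x^{\lambda}(s),(Hx^{\lambda})(s)) : \lambda>0\}$ is relatively compact in $X$ for a.e.\ $s$, and similarly for $\{g(x^{\lambda})\}$, $\{I_{k-1}(x^{\lambda}(t_{k-1}^-))\}$. Hence along a subsequence $\lambda_n\to 0^+$ one has strong convergence $f(\cdot,x^{\lambda_n}(\cdot),(Hx^{\lambda_n})(\cdot))\to \varphi(\cdot)$ in $L^1((t_{k-1},t_k);X)$ (using the growth bound in (H$_2$) for a dominating function, plus Lebesgue's theorem), $g(x^{\lambda_n})\to g^*$, and $I_{k-1}(x^{\lambda_n}(t_{k-1}^-))\to I^*_{k-1}$ in $X$. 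Consequently $P_1^k(x^{\lambda_n})\to P_1^{k,*}$ and $P_2^k(x^{\lambda_n})\to P_2^{k,*}$ for fixed limit points in $X$, and each $\|P_i^k(x^{\lambda})\|$ is bounded uniformly in $\lambda$ by a constant depending only on $r$, $M$, $\|x_k\|$, and the $L^\infty$ norms of the $\mu_i$ — exactly the estimates already displayed in the proof of Theorem~\ref{Theorem:2.10}.

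The third step closes the argument: write
\begin{equation*}
\lambda\mathcal{R}(\lambda,\Psi^{t_k}_{t_{k-1},i})P_i^k(x^{\lambda})
=\lambda\mathcal{R}(\lambda,\Psi^{t_k}_{t_{k-1},i})\bigl(P_i^k(x^{\lambda})-P_i^{k,*}\bigr)
+\lambda\mathcal{R}(\lambda,\Psi^{t_k}_{t_{k-1},i})P_i^{k,*}.
\end{equation*}
Since $\|\lambda\mathcal{R}(\lambda,\Psi^{t_k}_{t_{k-1},i})\|\le 1$ for all $\lambda>0$, the first term is bounded by $\|P_i^k(x^{\lambda})-P_i^{k,*}\|\to 0$ along the subsequence, while the second term tends to $0$ by the hypothesis that $\lambda\mathcal{R}(\lambda,\Psi^{t_k}_{t_{k-1},i})\to 0$ strongly (equivalently, by Lemma~\ref{Lemma:2.8}, approximate controllability of the linearized system \eqref{e3}). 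Therefore $\|x^{\lambda_n}(t_k)-x_k\|\to 0$; this holds for every target $x_k\in X$ and every node $k=1,\dots,m+1$, which is precisely the statement that $\overline{\mathfrak{R}(t_k,x(0),\triangle x(t_{k-1}))}=X$ in the sense of Definition~\ref{Definition:2.7}. Since the limit is independent of the extracted subsequence, the full net $\lambda\to 0^+$ converges, and applying this inductively across the successive subintervals (the node value $x^{\lambda}(t_{k-1})$ feeding into $P^{k}$ is itself controlled by the same reasoning at stage $k-1$) gives approximate controllability on all of $[0,b]\setminus\{t_1,\dots,t_m\}$.

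The main obstacle I expect is the second step — extracting the compactness needed to pass to the limit in $P_i^k(x^{\lambda})$. One must be careful that the family of mild solutions $\{x^{\lambda}\}_{\lambda>0}$ is itself bounded in $PC(I,X)$ uniformly in $\lambda$ (so that all the a priori estimates apply with one fixed $r$), and that the compactness of $f$ together with the estimate in (H$_2$) genuinely upgrades pointwise relative compactness of $\{f(s,x^\lambda(s),(Hx^\lambda)(s))\}$ to $L^1$-convergence of a subsequence; this requires an equi-integrability observation, which is immediate here since the dominating function $\mu_1+r\mu_2+K^*_k r\mu_3$ lies in $L^\infty\subset L^1$. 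The impulsive structure adds only bookkeeping: one propagates the approximate-controllability conclusion node by node, using at stage $k$ the already-established control of $x^{\lambda}(t_{k-1}^-)$ near its prescribed value.
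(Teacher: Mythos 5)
Your proposal follows essentially the same route as the paper: invoke Theorem~\ref{Theorem:2.10} to get the mild solution with the feedback controls \eqref{e6}, apply the resolvent identity $I-\Psi^{t_k}_{t_{k-1},i}\mathcal{R}(\lambda,\Psi^{t_k}_{t_{k-1},i})=\lambda\mathcal{R}(\lambda,\Psi^{t_k}_{t_{k-1},i})$ to express $x_{t_k}-\overline{x}^{\lambda}(t_k)$ as $\lambda\mathcal{R}(\lambda,\cdot)$ applied to the $P_i^k$ terms, and then use compactness/boundedness of $f$, $g$, $I_{k-1}$ together with the strong convergence $\lambda\mathcal{R}(\lambda,\Psi^{t_k}_{t_{k-1},i})\to 0$ to conclude. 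Your second and third steps merely spell out in more detail the subsequence-extraction argument that the paper compresses into one sentence, so the approach is the same and correct.
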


\begin{proof}
According to Theorem~\ref{Theorem:2.10}, $Q_1^\lambda+Q_2^\lambda$ 
has a fixed point in $\Omega_{r}$ for any $\lambda>0$. This implies that there exists
$\overline{x}^{\lambda}\in (Q_1^{\lambda}
+Q_2^{\lambda})(\overline{x}^{\lambda})$ such that
\begin{equation*}
\overline{x}^{\lambda}(t)=
\begin{cases}
S_q(t)(x_0-\overline{g}^{\lambda}(\overline{x}^{\lambda}))\\
~~~~~+\int_{0}^{t}T_{q}(t-s)[\overline{f}^{\lambda}(s, \overline{x}^{\lambda}(s), 
(H\overline{x}^{\lambda})(s))+ B\overline{u}^{\lambda}(s)]ds,~ t\in [0, t_1],\\[0.3cm]
S_q(t-t_{k-1})[\overline{x}^{\lambda}(t_{k-1}^-)+\overline{I_{k-1}}^{\lambda}(
\overline{x}^{\lambda}(t_{k-1}^-))+D\overline{v}^{\lambda}(t_{k-1}^-)]\\
~~~~~+\int_{t_{k-1}}^{t}T_{q}(t-s)[\overline{f}^{\lambda}(s, 
\overline{x}^{\lambda}(s), (H\overline{x}^{\lambda})(s))
+ B\overline{u}^{\lambda}(s)]ds,~  t\in (t_{k-1}, t_{k}],
\end{cases}
\end{equation*}
where for $t\in [0, t_1]$ we have
\begin{multline*}
\overline{u}^{\lambda}
= B^{\ast}T_q^{\ast}(t_1-t)
\mathcal{R}(\lambda, \Psi^{t_1}_{0,1})\biggl[
x_1-S_q(t_1)(x_0-\overline{g}^{\lambda}(\overline{x}^{\lambda}))\\
-\int_{0}^{t_1}T_q(t_1-s)\overline{f}^{\lambda}(s, \overline{x}^{\lambda}(s), 
(H\overline{x}^{\lambda})(s))ds\biggr]
\end{multline*}
while for $k=2,\ldots,m+1$ 
\begin{multline*}
\overline{u}^{\lambda} =
B^{\ast}T_q^{\ast}(t_k-t)
\mathcal{R}(\lambda, \Psi^{t_k}_{t_{k-1},1})\biggl[x_k-S_q(t_k-t_{k-1})[
\overline{x}^{\lambda}(t_{k-1}^-)+\overline{I_{k-1}}^{\lambda}(
\overline{x}^{\lambda}(t_{k-1}^-))]\\
-\int_{t_{k-1}}^{t_k}T_q(t_k-s)
\overline{f}^{\lambda}(s, \overline{x}^{\lambda}(s), 
(H\overline{x}^{\lambda})(s))ds\biggr]
\end{multline*}
and
\begin{multline*}
\overline{v}^{\lambda}=
D^{\ast}S_q^{\ast}(t_k-t_{k-1})
\mathcal{R}(\lambda, \Psi^{t_k}_{t_{k-1},2})\\
\times \biggl[
x_k-S_q(t_k-t_{k-1})[\overline{x}^{\lambda}(t_{k-1}^-)
+\overline{I_{k-1}}^{\lambda}(\overline{x}^{\lambda}(t_{k-1}^-))]\\
-\int_{t_{k-1}}^{t_k}T_q(t_k-s)\overline{f}^{\lambda}(s, 
\overline{x}^{\lambda}(s), (H\overline{x}^{\lambda})(s))ds\biggr].
\end{multline*}
Furthermore, 
\begin{multline*}
\overline{x}^{\lambda}(t_1)=
S_q(t_1)(x_0-\overline{g}^{\lambda}(\overline{x}^{\lambda}))\\
+\int_{0}^{t_1}T_q(t_1-s)[\overline{f}^{\lambda}(s, 
\overline{x}^{\lambda}(s), (H\overline{x}^{\lambda})(s))
+ B\overline{u}^{\lambda}(s)]ds,
\end{multline*}
\begin{multline*}
\overline{x}^{\lambda}(t_k)=
S_q(t_k-t_{k-1})[\overline{x}^{\lambda}(t_{k-1}^-)
+\overline{I_{k-1}}^{\lambda}(\overline{x}^{\lambda}(t_{k-1}^-))
+D\overline{v}^{\lambda}(t_{k-1}^-)]\\
+\int_{t_{k-1}}^{t_k}T_q(t_k-s)[\overline{f}^{\lambda}(s, 
\overline{x}^{\lambda}(s), (H\overline{x}^{\lambda})(s))
+ B\overline{u}^{\lambda}(s)]ds,
\end{multline*}
$k=2,\dots,m+1$, with
\begin{multline*}
x_{t_1}-\overline{x}^{\lambda}(t_1)=
x_1-\Psi^{t_{1}}_{0, 1}\mathcal{R}(\lambda, 
\Psi^{t_{1}}_{0, 1})\biggl\lbrace x_1-S_q(t_1)(x_0
-\overline{g}^{\lambda}(\overline{x}^{\lambda}))\\
-\int_{0}^{t_1}T_q(t_1-s)\overline{f}^{\lambda}(s, 
\overline{x}^{\lambda}(s), (H\overline{x}^{\lambda})(s))ds\biggr\rbrace\\
-S_q(t_1)(x_0-\overline{g}^{\lambda}(\overline{x}^{\lambda}))
-\int_{0}^{t_1}T_q(t_1-s)\overline{f}^{\lambda}(s, \overline{x}^{\lambda}(s), 
(H\overline{x}^{\lambda})(s))ds,
\end{multline*}
\begin{equation*}
\begin{split}
x_{t_k}&-\overline{x}^{\lambda}(t_k)=
x_k-\Psi^{t_{k}}_{k-1, 2}\mathcal{R}(\lambda, \Psi^{t_{k}}_{k-1, 2})\\
&\times \biggl\lbrace
x_k-S_q(t_k-t_{k-1})\left[\overline{x}^{\lambda}(t_{k-1}^-)
+\overline{I_{k-1}}^{\lambda}(\overline{x}^{\lambda}(t_{k-1}^-))\right]\\
&-\int_{t_{k-1}}^{t_k}
T_q(t_k-s)\overline{f}^{\lambda}\left(s, \overline{x}^{\lambda}(s), 
(H\overline{x}^{\lambda})(s)\right)ds\biggr\rbrace\\
&-S_q(t_k-t_{k-1})\left[\overline{x}^{\lambda}(t_{k-1}^-)
+\overline{I_{k-1}}^{\lambda}(\overline{x}^{\lambda}(t_{k-1}^-))\right]\\
&-\int_{t_{k-1}}^{t_k}
T_q(t_k-s)\overline{f}^{\lambda}\left(s, \overline{x}^{\lambda}(s), 
(H\overline{x}^{\lambda})(s)\right)ds\\
&-\Psi^{t_{k}}_{k-1, 1}\mathcal{R}(\lambda, 
\Psi^{t_{k}}_{k-1, 1})\biggl\lbrace x_k
-S_q(t_k-t_{k-1})\left[\overline{x}^{\lambda}(t_{k-1}^-)
+\overline{I_{k-1}}^{\lambda}(\overline{x}^{\lambda}(t_{k-1}^-))\right]\\
&-\int_{t_{k-1}}^{t_k}T_q(t_k-s)
\overline{f}^{\lambda}\left(s, \overline{x}^{\lambda}(s), 
(H\overline{x}^{\lambda})(s)\right)ds\biggr\rbrace,
\quad  k=2,\dots,m+1.
\end{split}
\end{equation*}
From \eqref{e5} we have
$I-\Psi^{t_k}_{t_{k-1},i}\mathcal{R}\left(\lambda, \Psi^{t_k}_{t_{k-1},i}\right)
=\lambda\mathcal{R}\left(\lambda, \Psi^{t_k}_{t_{k-1},i}\right)$, $i=1,2$,
and
\begin{multline}
\label{e8a}
x_{t_1}-\overline{x}^{\lambda}(t_1)=
\lambda\mathcal{R}\left(\lambda, \Psi^{t_{1}}_{0, 1}\right)\biggl\lbrace
x_1-S_q(t_1)(x_0-\overline{g}^{\lambda}(\overline{x}^{\lambda}))\\
-\int_{0}^{t_1}T_q(t_1-s)\overline{f}^{\lambda}(s, \overline{x}^{\lambda}(s), 
(H\overline{x}^{\lambda})(s))ds\biggr\rbrace,
\end{multline}
\begin{multline}
\label{e8b}
x_{t_k}-\overline{x}^{\lambda}(t_k)=
\lambda\left[\mathcal{R}\left(\lambda, \Psi^{t_{k}}_{k-1, 1}\right)
+\mathcal{R}\left(\lambda, \Psi^{t_{k}}_{k-1, 2}\right)\right]\\
\times \biggl\lbrace
x_k-S_q(t_k-t_{k-1})\left[\overline{x}^{\lambda}(t_{k-1}^-)
+\overline{I_{k-1}}^{\lambda}(\overline{x}^{\lambda}(t_{k-1}^-))\right]\\
-\int_{t_{k-1}}^{t_k}T_q(t_k-s)\overline{f}^{\lambda}(s, 
\overline{x}^{\lambda}(s), (H\overline{x}^{\lambda})(s))]ds\biggr\rbrace,
~  k=2,\dots,m+1.
\end{multline}
Since compactness of both $S_q(t)_{t>0}$ and $T_q(t)_{t>0}$ hold, 
and also boundedness of $\overline{f}^{\lambda}$, $\overline{g}^{\lambda}$ 
and $\overline{I_{k-1}}^{\lambda}$, we can use on \eqref{e8a}--\eqref{e8b} 
the fact that $\lambda\mathcal{R}(\lambda, \Psi^{t_k}_{t_{k-1}, i})\rightarrow 0$ 
in the strong operator topology as $\lambda\rightarrow 0^{+}$, $i=1,2$. 
This gives $\Vert x_{t_k}-\overline{x}^{\lambda}(t_k)\Vert_\alpha\to 0$ 
as $\lambda\rightarrow 0^{+}$, $i=1,2$. Hence, 
the impulsive non-local fractional control system \eqref{e1}
is approximately controllable on $t\in [0, b]\setminus\lbrace t_1,\ldots,t_m\rbrace$.
\end{proof}


\section{Optimality}
\label{sec:5}

Let $Y$ be a separable reflexive Banach space and $w_{f}(Y)$ represent a class
of non-empty, closed and convex subsets of $Y$. The multifunction 
$w: I\longrightarrow w_{f}(Y)$ is measurable and $w(\cdot)\subset E$, 
where $E$ is a bounded set of $Y$. We give the admissible control set as follows:
$$
U_{ad}=\left\lbrace (u, v)\in L^{1}(E)\times L^{1}(E)| 
u(t), v(t)\in w(t) ~a. e.\right\rbrace \neq\emptyset.
$$
Consider the following impulsive nonlocal fractional control system:
\begin{equation} 
\label{e9}
\left\lbrace 
\begin{array}{lll} 
^{C}D_{t}^{q}x(t)=Ax(t)+f(t, x(t), (Hx)(t))+\mathcal{B}u(t), 
t\in (0, b]\setminus\lbrace t_1, t_2,\ldots, t_m\rbrace,\\
x(0)+g(x)=x_{0}\in X,\\
\triangle x(t_{i})=I_{i}(x(t_{i}^{-}))+\mathcal{D}v(t_{i}^{-}), i=1,2,\ldots,m,
\quad (u, v)\in U_{ad},
\end{array}
\right.
\end{equation}
where $\mathcal{B}, \mathcal{D}\in L^{\infty}(I, L(Y, X))$. 
It is clear that $\mathcal{B}u, \mathcal{D}v\in L^{1}(I, X)$ 
for all $(u, v)\in U_{ad}$.
Let $x^{u,v}$ be a mild solution of system \eqref{e9} 
corresponding to controls $(u, v)\in U_{ad}$. We consider
the Bolza problem $(BP)$: find an optimal triplet 
$(x^{0}, u^{0}, v^{0})\in PC(I, X)\times U_{ad}$ such that
$\mathcal{J}(x^{0}, u^{0}, v^{0})\leq \mathcal{J}(x^{u,v}, u, v)$,  
for all $(u, v)\in U_{ad}$, where
$$
\mathcal{J}(x^{u,v}, u, v)=\sum\limits_{i=1}^{m+1}\left[\Phi (x^{u,v}(t_i))
+\int_{t_{i-1}}^{t_i} \mathcal{L}(t, x^{u,v}(t), u(t), v(t))dt\right], 
$$
$i=1,\ldots,m+1$. The following extra assumptions are needed:
\begin{itemize}
\item[(H$_6$)] The functional $\mathcal{L}: I\times X\times Y^2
\to \mathbb{R}\cup\left\lbrace\infty\right\rbrace$ is Borel measurable.

\item[(H$_7$)] $\mathcal{L}(t,\cdot,\cdot,\cdot)$ is sequentially 
lower semi-continuous on $X\times Y^2$, a.e. on $I$.

\item[(H$_8$)] $\mathcal{L}(t,\cdot,\cdot,\cdot)$ is convex 
on $Y^2$ for each $x\in X$ and almost all $t\in I$.

\item[(H$_9$)] There is a non-negative function $\varphi
\in L^{\infty}(I, \mathbb{R})$ and $c_1, c_2, c_3\geq0$ such that
$\mathcal{L}(t, x, u, v)\geq\varphi(t) + c_1\|x\| + c_2\|u\|_{Y}^{p} + c_3\|v\|_{Y}^{p}$.

\item[(H$_{10}$)] The functional $\Phi: X\to \mathbb{R}$ 
is continuous and non-negative.
\end{itemize}

\begin{theorem}
\label{Theorem:2.12}
If (H$_6$)--(H$_{10}$) hold together with the assumptions of 
Theorem~\ref{Theorem:2.10}, then the Bolza problem $(BP)$ 
admits at least one optimal triplet on $PC\times U_{ad}$.
\end{theorem}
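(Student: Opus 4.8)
The plan is to use the classical direct method of the calculus of variations (the Filippov--type existence scheme), exploiting that the admissible control set is weakly compact and that the cost functional is weakly lower semicontinuous. First I would observe that, by Theorem~\ref{Theorem:2.10} (whose hypotheses (H$_1$)--(H$_5$) are in force), for every admissible pair $(u,v)\in U_{ad}$ the system \eqref{e9} possesses at least one mild solution $x^{u,v}\in PC(I,X)$, so the feasible set of the Bolza problem $(BP)$ is non-empty. If $\inf\{\mathcal{J}(x^{u,v},u,v):(u,v)\in U_{ad}\}=+\infty$ there is nothing to prove, so assume the infimum is finite and equal to $\varepsilon\geq 0$ (it is bounded below by (H$_9$) and (H$_{10}$), since $\varphi\in L^{\infty}$). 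Choose a minimizing sequence $\{(x^n,u^n,v^n)\}$ with $x^n=x^{u^n,v^n}$ and $\mathcal{J}(x^n,u^n,v^n)\to\varepsilon$.

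Next I would extract convergent subsequences. Since $w(\cdot)\subset E$ with $E$ bounded in the separable reflexive space $Y$, the sequences $\{u^n\}$ and $\{v^n\}$ are bounded in $L^{p}(I,Y)$ (indeed in $L^{\infty}$); by reflexivity and the Banach--Alaoglu theorem, after passing to a subsequence, $u^n\rightharpoonup u^0$ and $v^n\rightharpoonup v^0$ weakly in $L^{p}(I,Y)$. Because $U_{ad}$ is convex, closed and bounded in $L^{p}(I,Y)\times L^{p}(I,Y)$, it is weakly closed, hence $(u^0,v^0)\in U_{ad}$. For the states, I would use the mild-solution representation together with (H$_1$)--(H$_5$): the a priori bounds established in the proof of Theorem~\ref{Theorem:2.10} show $\{x^n\}$ is bounded in $PC(I,X)$, the compactness of $S_q(t)$, $T_q(t)$ and of $f$, $g$, $I_i$ gives (via the Ascoli--Arzel\`a argument already carried out there) that $\{x^n\}$ is relatively compact in each $PC((t_{i-1},t_i],X)$; so along a further subsequence $x^n\to x^0$ in $PC(I,X)$. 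Passing to the limit in the integral equations, using the compactness/continuity of $f$, $g$, $I_i$ (strong convergence of $x^n$), the strong continuity of $S_q$, $T_q$, and the fact that the control terms $\int \mathcal{B}u^n\,ds$ converge because $\mathcal{B}\in L^{\infty}(I,L(Y,X))$ and $u^n\rightharpoonup u^0$ (the operator $u\mapsto\int_{t_{k-1}}^{t}T_q(t-s)\mathcal{B}u(s)\,ds$ is compact, hence maps weakly convergent sequences to strongly convergent ones), I would conclude that $x^0=x^{u^0,v^0}$ is the mild solution of \eqref{e9} associated with $(u^0,v^0)$.

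Finally I would show $\mathcal{J}(x^0,u^0,v^0)\leq\varepsilon$, which forces equality and makes $(x^0,u^0,v^0)$ optimal. The terminal terms pass by continuity of $\Phi$ (H$_{10}$) and $x^n(t_i)\to x^0(t_i)$. For the running cost I would invoke Balder's theorem on lower semicontinuity of integral functionals: hypotheses (H$_6$) (Borel measurability), (H$_7$) (sequential lower semicontinuity in $(x,u,v)$), (H$_8$) (convexity in $(u,v)$ for fixed $x$) and the coercivity (H$_9$) are exactly the conditions ensuring that
$$
\liminf_{n\to\infty}\int_{t_{i-1}}^{t_i}\mathcal{L}(t,x^n(t),u^n(t),v^n(t))\,dt
\;\geq\;\int_{t_{i-1}}^{t_i}\mathcal{L}(t,x^0(t),u^0(t),v^0(t))\,dt
$$
whenever $x^n\to x^0$ strongly and $(u^n,v^n)\rightharpoonup(u^0,v^0)$ weakly in $L^{p}$. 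Summing over $i=1,\ldots,m+1$ yields $\mathcal{J}(x^0,u^0,v^0)\leq\liminf_n\mathcal{J}(x^n,u^n,v^n)=\varepsilon$, completing the proof. The main obstacle I anticipate is the limit passage in the state equation: one must carefully justify, on each subinterval $(t_{k-1},t_k]$ and across the impulse/jump terms, that the weakly convergent controls produce strongly convergent contributions (via compactness of the convolution operators with $\mathcal{B},\mathcal{D}$) while simultaneously handling the nonlinear, nonlocal terms $f(\cdot,x^n,Hx^n)$, $g(x^n)$ and $I_{k-1}(x^n(t_{k-1}^-))$ through the compactness assumptions in (H$_2$), (H$_3$), (H$_5$); the bookkeeping through the $m$ impulse points is where most of the technical care is needed.
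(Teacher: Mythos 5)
Your proposal follows essentially the same route as the paper's proof: the direct method with a minimizing sequence, weak convergence of the controls combined with convexity and closedness of $U_{ad}$ (Mazur), strong convergence of the corresponding mild solutions obtained from the compactness hypotheses, and Balder's theorem for the lower semicontinuity of the running cost. The only differences are refinements in your favour — you work in $L^{p}$ via reflexivity rather than asserting weak compactness of a bounded set in $L^{1}$, and you make explicit the mechanism (compactness of the convolution operators $u\mapsto\int T_q(t-s)\mathcal{B}u(s)\,ds$) behind the limit passage in the state equation, which the paper dispatches with ``it is not difficult to check.''
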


\begin{proof}
Assume $\inf \left\lbrace \mathcal{J}(x^{u,v}, u, v)\vert (u, v)
\in U_{ad}\right\rbrace = \delta<+\infty $. From (H$_6$)--(H$_{10}$), 
\begin{align*}
\mathcal{J}&(x^{u,v}, u, v)\\
&\geq\sum\limits_{i=1}^{m+1}\left[\Phi (x^{u,v}(t_i))
+ \int_{t_{i-1}}^{t_i}\left\lbrace\varphi(t) + c_1\|x(t)\| 
+ c_2\|u(t)\|_{Y}^{p} + c_3\|v(t)\|_{Y}^{p}\right\rbrace dt\right]\\
&\geq -\eta>-\infty,~ i=1,\ldots,m+1.
\end{align*}
Here, $\eta$ is a positive constant, i.e.,
$\delta\geq -\eta> -\infty$. 
By the definition of infimum, there exists 
a minimizing sequence of feasible triplets
$\left\lbrace(x^{n}, u^{n}, v^{n})\right\rbrace\subset 
\mathcal{A}_{ad}$, where $\mathcal{A}_{ad} \equiv\lbrace (x, u, v)\vert$ $x$ 
is a mild solution of system \eqref{e9} corresponding to $(u, v)\in U_{ad}\rbrace$, 
such that $\mathcal{J}(x^{n}, u^{n}, v^{n}) \to \delta$ as $m \to +\infty$. 
As $\left\lbrace (u^{n}, v^{n})\right\rbrace \subseteq U_{ad}$ and  
$\left\lbrace u^{n}, v^{n}\right\rbrace $ is bounded in $L^{1}(I, Y)$, 
then there exists a subsequence, still denoted by $\left\lbrace (u^{n}, 
v^{n})\right\rbrace$, and $u^{0}, v^{0} \in L^{1}(I, Y)$, such that
$(u^n, v^n)\stackrel{\text{weakly}}{\longrightarrow}(u^0, v^0)$ 
in $L^{1}(I, Y)\times L^{1}(I, Y)$.
Since the admissible control set $U_{ad}$ is convex and closed, 
by Marzur lemma, we have $(u^{0}, v^{0})\in U_{ad}$.
Suppose that $x^{n}$ is a mild solution of system \eqref{e9}, 
corresponding to $u^{n}$ and $v^{n}$, that satisfies
\begin{equation*}
x^n(t)= 
S_{q}(t)(x_{0}-g(x^n))+\int_{0}^{t}T_{q}(t-s)(f(s,x^n(s),(Hx^n)(s))
+\mathcal{B}u^n(s))ds,
\end{equation*}
for $t\in[0, t_{1}]$, and
\begin{multline*}
x^n(t)= 
S_{q}(t-t_{i})[x^n(t_i^-)+I_{i}(x^n(t_{i}^{-}))+\mathcal{D}v^n(t_{i}^{-})]\\
+\int_{t_{i}}^{t}T_{q}(t-s)[f(s,x^n(s),(Hx^n)(s))+\mathcal{B}u^n(s)]ds
\end{multline*}
for $t\in(t_{i}, t_{i+1}]$, $i=1,\ldots,m$.
From (H$_2$), the non-linear function $f$ is bounded and continuous. 
Then, there exists a subsequence (with the same notation)
$\left\lbrace f(s, x^{n}, (Hx^{n})(s))\right\rbrace $ and 
$f(s, x^0, (Hx^0)(s)) \in L^{1}(I, X)$ such that $f(s, x^{n}, (Hx^{n})(s))$  
converges weakly to $f(s, x^0, (Hx^0)(s))$. Also, the same arguments on 
(H$_3$) and (H$_5$) yield other weak convergences of $g(x^n)$ 
and $I_i(x^n)$ to $g(x^0)$ and $I_i(x^0)$, respectively. Let us denote
\begin{equation*}
(P_1x)(t)=S_{q}(t)g(x)+\int_{0}^{t}T_{q}(t-s)(f(s,x(s),(Hx)(s))
+\mathcal{B}u(s))ds, \quad t\in[0, t_{1}],
\end{equation*}
\begin{multline*}
(P_2x)(t)=S_{q}(t-t_{i})[x(t_i^-)+I_{i}(x(t_{i}^{-}))+\mathcal{D}v(t_{i}^{-})]\\
+\int_{t_{i}}^{t}T_{q}(t-s)[f(s,x(s),(Hx)(s))+\mathcal{B}u(s)]ds,
\ t\in(t_{i}$, $t_{i+1}], \ i=1,\ldots,m.
\end{multline*}
Obviously, $(P_1x)(t)$ and $(P_2x)(t)$ are strongly continuous operators. 
Thus, $(P_1x^n)(t)$ and $(P_2x^n)(t)$ strongly converge to $(P_1x)(t)$ 
and $(P_2x)(t)$, respectively. Next, we consider the system
\begin{equation*}
x^0(t)=
S_{q}(t)(x_{0}-g(x^0))+\int_{0}^{t}T_{q}(t-s)(f(s,x^0(s),(Hx^0)(s))
+\mathcal{B}u^0(s))ds,
\end{equation*}
$t\in[0, t_{1}]$, and
\begin{multline*}
x^0(t)=
S_{q}(t-t_{i})[x^0(t_i^-)+I_{i}(x^0(t_{i}^{-}))+\mathcal{D}v^0(t_{i}^{-})]\\
+\int_{t_{i}}^{t}T_{q}(t-s)[f(s,x^0(s),(Hx^0)(s))+\mathcal{B}u^0(s)]ds, 
\end{multline*}
$t\in(t_{i}, t_{i+1}]$, $i=1,\ldots,m$.
It is not difficult to check that $\Vert x^n(t)-x^0(t)\Vert\to 0$ as $n \to\infty$.
Therefore, we can infer that $x^{n}$ strongly converges 
to $x^0$ in $PC(I, X)$ as $n\to\infty$. From assumptions (H$_6$)--(H$_{10}$) 
and Balder's theorem, we get
\begin{align*}
\eta &=\lim\limits_{n\to\infty}\sum\limits_{i=1}^{m+1}\left[\Phi (x^{n}(t_i))
+\int_{t_{i-1}}^{t_i} \mathcal{L}(t, x^{n}(t), u^n(t), v^n(t))dt\right]\\
&\geq\sum\limits_{i=1}^{m+1}\left[\Phi (x^0(t_i))+\int_{t_{i-1}}^{t_i} 
\mathcal{L}(t, x^0(t), u^0(t), v^0(t))dt\right]\\
&=\mathcal{J}(x^{0}, 
u^{0}, v^{0})\geq \eta,~ i=1,\ldots,m+1,
\end{align*}
which implies that $\mathcal{J}$ attains its minimum 
at $(x^0, u^{0}, v^{0})\in PC(I, X)\times U_{ad}$. 
\end{proof}


\section*{Acknowledgement} 

This research is part of first author's PhD project.
Debbouche and Torres are supported by FCT and CIDMA 
through project UID/MAT/04106/2013.



\end{document}